\documentclass{amsart}
\usepackage{amssymb,color,hhline}
\usepackage{amsmath}
\usepackage{a4}
%\def\version{Version-v2i as of 23 January 2008 by PBG}
%\date{\version}
%\usepackage{a4}
%\usepackage[notref,notcite]{showkeys}
 \newtheorem{thm}{Theorem}
 \newtheorem{theorem}[thm]{Theorem}
 
 \newtheorem{lemma}[thm]{Lemma}
 
 \newtheorem{remark}[thm]{Remark}
 \theoremstyle{definition}
 
 \theoremstyle{remark}
 
% \numberwithin{equation}{section}
\makeatletter
%  \renewcommand{\theequation}{%
%   \thesection.\alph{equation}}
%  \@addtoreset{equation}{section}
% \makeatother
%\usepackage{color}
%\def\red{\color{red}}
%\def\blue{\color{blue}}
%\def\green{\color{green}}
\def\mapright#1{\smash{\mathop{\longrightarrow}\limits\sp{#1}}}
\def\qedbox{\hbox{$\rlap{$\sqcap$}\sqcup$}}
\def\SectIntroduction{1}\def\SectTorsionFree{2}\def\SectAffine{3}
\def\SectRep{4}\def\SectWeyl{5}\def\SectRicciFlat{6}
\begin{document}
  \title[Affine curvature operators]{Geometrical representations\\ of equiaffine curvature operators}
\author{P. Gilkey}
  \begin{address}{Mathematics Department, University of Oregon,
 Eugene Oregon 97403 USA}\end{address}
    \begin{email}{gilkey@uoregon.edu}\end{email}
\author{S. Nik\v cevi\'c}
 \begin{address}{Mathematical Institute, SANU,
 Knez Mihailova 35, p.p. 367,
 11001 Belgrade,
 Serbia.}\end{address}
    \begin{email}{stanan@mi.sanu.ac.yu}\end{email}
\begin{abstract}
We examine geometric representability results for various classes of equiaffine curvature operators.
\end{abstract}
 \keywords{algebraic curvature operator, equiaffine, projectively flat, Ricci symmetric, Ricci flat, Weyl
projective curvature.
\newline 2000 {\it Mathematics Subject Classification.} 53B20}\maketitle
\centerline{This is dedicated to Professor Udo Simon}\medbreak

\medbreak\noindent{\bf \SectIntroduction. Introduction.} Curvature is a fundamental object of study in
differential geometry. It is often convenient to work first in an algebraic context and then pass to the geometric setting subsequently.
In this paper, we shall discuss questions of geometric representability in the affine setting. Here is a brief outline to the paper. In
Section
\SectTorsionFree, we establish notational conventions. In Section
\SectAffine, we show any equiaffine algebraic curvature operator arises from an equiaffine connection. 
In Section \SectRep, we discuss the decomposition of the space of equiaffine algebraic curvature operators into two summands
which are irreducible under the natural action of the general group in dimension $m\ge3$. This gives rise to two additional geometric
representation questions. In Section \SectWeyl, we
discuss the Weyl projective curvature tensor and show any projectively flat equiaffine algebraic curvature operator arises from a
projectively flat equiaffine connection. In Section \SectRicciFlat, we show every Ricci flat equiaffine algebraic curvature operator is
geometrically representable by a Ricci flat equiaffine connection.

Throughout this paper, we will be using results from affine differential geometry; \cite{SSV91} is an excellent
reference for this material which contains a more complete bibliography than we can present here. We
also refer to \cite{BGNS06} for a discussion of
associated representation theory discussed in Section \SectRep. We adopt the {\it Einstein convention} and sum over
repeated indices. All connections are assumed to be torsion free connections on the tangent bundle $TM$ of a smooth manifold $M$. A
connection is equiaffine if it locally admits a parallel volume form or, equivalently (see Lemma \ref{lem-2}), the associated Ricci tensor
is symmetric.

\medbreak\noindent{\bf \SectTorsionFree. Torsion free algebraic curvature operators.} Let
$V$ be a real vector space of dimension
$m$. Let $\mathcal{A}\in\otimes^2V^*\otimes\operatorname{End}(V)$ be an {\it algebraic
curvature operator}, i.e. $\mathcal{A}$ has the symmetries of the curvature operator defined by a torsion free connection:
\begin{equation}\label{eqn-1}\mathcal{A}(x,y)z=-\mathcal{A}(y,x)z\quad\text{and}\quad
\mathcal{A}(x,y)z+\mathcal{A}(y,z)x+\mathcal{A}(z,x)y=0\,.\end{equation}
Let $\mathfrak{A}(V)$ be the vector space of all such operators; see for example the discussion in
\cite{B90}.
In the geometrical setting, let $\nabla$ be a torsion free connection on $TM$. If $P\in M$, let
$\mathcal{R}_P^\nabla$ be the associated curvature operator; $\mathcal{R}_P^\nabla\in\mathfrak{A}(T_PM)$ for any $P\in M$.
We have the following representability theorem:
\begin{theorem}\label{thm-1}
If $\mathcal{A}\in\mathfrak{A}(V)$, then there is a torsion free connection $\nabla$ on
$T(V)$ so that $\mathcal{R}^\nabla_0=\mathcal{A}$.
\end{theorem}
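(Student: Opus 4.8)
\noindent The plan is to produce $\nabla$ explicitly in the global linear coordinate system on $V\cong\mathbb{R}^m$ determined by a basis $\{e_1,\dots,e_m\}$, with coordinates $(x^1,\dots,x^m)$ and commuting coordinate frame $\{\partial_1,\dots,\partial_m\}$. Write $\mathcal{A}(e_i,e_j)e_k=\mathcal{A}_{ijk}{}^l e_l$. A torsion free connection on $T(V)$ is the same as a choice of Christoffel symbols $\Gamma_{jk}^l$ symmetric in the lower indices, via $\nabla_{\partial_j}\partial_k=\Gamma_{jk}^l\partial_l$; since the frame commutes, writing $\mathcal{R}^\nabla(\partial_i,\partial_j)\partial_k=R_{ijk}{}^l\partial_l$ one has
\[
R_{ijk}{}^l=\partial_i\Gamma_{jk}^l-\partial_j\Gamma_{ik}^l+\Gamma_{jk}^m\Gamma_{im}^l-\Gamma_{ik}^m\Gamma_{jm}^l.
\]

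\noindent The key idea is to take the Christoffel symbols linear in the coordinates and vanishing at the origin, so that the two quadratic terms above drop out at $0$ and the curvature there reduces to the antisymmetrization of the linear part. I would set
\[
\Gamma_{jk}^l(x)=\tfrac13\left(\mathcal{A}_{ijk}{}^l+\mathcal{A}_{ikj}{}^l\right)x^i,
\qquad\text{i.e.}\qquad
\nabla_{\partial_j}\partial_k\big|_x=\tfrac13\bigl(\mathcal{A}(x,e_j)e_k+\mathcal{A}(x,e_k)e_j\bigr).
\]
This expression is manifestly symmetric under $j\leftrightarrow k$, so $\nabla$ is automatically torsion free, and it vanishes at $x=0$. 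Differentiating and evaluating at the origin then gives
\[
R_{ijk}{}^l(0)=\tfrac13\bigl(\mathcal{A}_{ijk}{}^l+\mathcal{A}_{ikj}{}^l-\mathcal{A}_{jik}{}^l-\mathcal{A}_{jki}{}^l\bigr).
\]

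\noindent The only real content is verifying that the right-hand side collapses to $\mathcal{A}_{ijk}{}^l$, and this is exactly where the hypothesis $\mathcal{A}\in\mathfrak{A}(V)$ enters. The first antisymmetry in \eqref{eqn-1} gives $\mathcal{A}_{jik}{}^l=-\mathcal{A}_{ijk}{}^l$ and $\mathcal{A}_{kij}{}^l=-\mathcal{A}_{ikj}{}^l$, while the first Bianchi identity, which is the second relation in \eqref{eqn-1} and reads $\mathcal{A}_{ijk}{}^l+\mathcal{A}_{jki}{}^l+\mathcal{A}_{kij}{}^l=0$ in components, lets me rewrite $\mathcal{A}_{jki}{}^l=-\mathcal{A}_{ijk}{}^l+\mathcal{A}_{ikj}{}^l$; substituting reduces the bracket to $3\mathcal{A}_{ijk}{}^l$, whence $\mathcal{R}^\nabla_0=\mathcal{A}$. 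I expect this Bianchi bookkeeping to be the heart of the matter: the factor $\tfrac13$ and the choice to feed the $(j,k)$-symmetric combination of $\mathcal{A}$ into $\Gamma$ are precisely what make the antisymmetrization reproduce $\mathcal{A}$ without spoiling the symmetry of $\Gamma$. Everything else is just the standard coordinate formula for the curvature of an affine connection together with the observation that a $\Gamma$ linear in $x$ contributes nothing quadratic at the base point.
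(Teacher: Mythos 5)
Your proof is correct and is essentially identical to the paper's own argument: the same linear Christoffel symbols $\Gamma_{jk}{}^l=\frac13(\mathcal{A}_{ijk}{}^l+\mathcal{A}_{ikj}{}^l)x^i$, the same observation that the quadratic terms vanish at the origin, and the same use of antisymmetry plus the first Bianchi identity to collapse the bracket to $3\mathcal{A}_{ijk}{}^l$. No gaps; only a cosmetic remark that your summation index $m$ in the curvature formula clashes with the paper's use of $m$ for $\dim V$.
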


\noindent\textit{Proof.} Although this result is well known, we include the proof as we shall need the construction subsequently. Fix a
basis $\{e_i\}$ for
$V$. Expand
$\mathcal{A}(e_i,e_j)e_k=A_{ijk}{}^\ell e_\ell$. If $v\in V$, expand
$v=x^ie_i$ where $\{x^i\}$ are the dual coordinates on $V$.  Define the Christoffel symbols of a connection
$\nabla$ on
$T(V)$ by setting:
\begin{equation}\label{eqn-2}
\Gamma_{uv}{}^l=\textstyle\frac13(A_{wuv}{}^l+A_{wvu}{}^l)x^w\,.
\end{equation}
Since $\Gamma_{uv}{}^l=\Gamma_{vu}{}^l$, $\nabla$ is torsion free. Since $\Gamma(0)=0$,
we use the curvature symmetries to complete the proof by computing:
\begin{equation}\label{eqn-3}
\begin{array}{l}
\quad\mathcal{R}^\nabla_0(\partial_{x_i},\partial_{x_j})\partial_{x_k}=
\left\{\partial_{x_i}\Gamma_{jk}{}^l-\partial_{x_j}\Gamma_{ik}{}^l\right\}(0)\partial_{x_l}\\
=\frac13\left\{A_{ijk}{}^l+A_{ikj}{}^l-A_{jik}{}^l-A_{jki}{}^l\right\}\partial_{x_l}
   \vphantom{\vrule height11pt}\\
=\frac13\left\{A_{ijk}{}^l-A_{kij}{}^l+A_{ijk}{}^l-A_{jki}{}^l\right\}\partial_{x_l}
=A_{ijk}{}^l\partial_{x_l}\,.\qquad\qedbox\vphantom{\vrule height 11pt}
\end{array}\end{equation}
\medbreak
Theorem \ref{thm-1} shows that every algebraic curvature operator is {\it geometrically representable}. Thus there are
no universal symmetries for the curvature operator of a torsion free connection other than those given above in Equation (\ref{eqn-1}).

\medbreak\noindent{\bf \SectAffine. Equiaffine curvature operators.} Let $\mathcal{A}\in\mathfrak{A}(V)$; the
{\it Ricci tensor} is defined by:
$$
\rho(x,y):=\operatorname{Tr}\{z\rightarrow\mathcal{A}(z,x)y\}\,.$$
We say a curvature operator is an {\it equiaffine curvature operator} or equivalently is a {\it Ricci symmetric
curvature operator} if $\rho(x,y)=\rho(y,x)$ for all $x,y$. We let
$\mathfrak{F}(V)\subset\mathfrak{A}(V)$ be the set of all such operators. Such operators play a central role in many
settings -- see, for example, the discussion in \cite{BDS03, M03, MS99, PSS94}. 

The following geometric observation will play a crucial role in our analysis and justifies the use of the
terminology {\it equiaffine curvature operators}; we refer to
\cite{BGNS06,SSV91} for further details.

\begin{lemma}\label{lem-2}
The following conditions are equivalent for a torsion free connection $\nabla$:
\begin{enumerate}
\item If $\mathcal{O}$ is a coordinate chart, let $\omega=\omega_{\mathcal{O}}:=\Gamma_{ij}{}^jdx^i$. Then $d\omega=0$.
\item One has that $\operatorname{Tr}(\mathcal{R})=0$.
\item The connection $\nabla$ is Ricci symmetric.
\item The connection $\nabla$ locally admits a parallel volume form.
\end{enumerate}
\end{lemma}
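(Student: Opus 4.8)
The plan is to route all four conditions through the local one-form $\omega=\Gamma_{ij}{}^j\,dx^i$ and its exterior derivative, the central fact being that $d\omega$ is exactly the trace of the curvature operator. First I would establish $(1)\Leftrightarrow(2)$ by a direct computation. Writing $\mathcal{R}(\partial_i,\partial_j)\partial_k=R_{ijk}{}^l\partial_l$ with $R_{ijk}{}^l=\partial_i\Gamma_{jk}{}^l-\partial_j\Gamma_{ik}{}^l+\Gamma_{im}{}^l\Gamma_{jk}{}^m-\Gamma_{jm}{}^l\Gamma_{ik}{}^m$, the trace over the endomorphism slot is $\operatorname{Tr}(\mathcal{R})(\partial_i,\partial_j)=R_{ijl}{}^l$. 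The key observation is that the two quadratic Christoffel terms cancel after relabelling the summed indices, leaving $R_{ijl}{}^l=\partial_i\omega_j-\partial_j\omega_i$, which is precisely $(d\omega)(\partial_i,\partial_j)$. Hence $\operatorname{Tr}(\mathcal{R})=d\omega$ as two-forms, and $(1)\Leftrightarrow(2)$ is immediate.

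Next I would prove $(2)\Leftrightarrow(3)$ using the first Bianchi identity, i.e. the second symmetry in (\ref{eqn-1}). Contracting the identity $R_{abc}{}^l+R_{bca}{}^l+R_{cab}{}^l=0$ on the index $l=a$ and exploiting the antisymmetry $R_{abc}{}^l=-R_{bac}{}^l$, the three contracted terms become, respectively, the Ricci tensor $\rho_{bc}$, the curvature trace $\operatorname{Tr}(\mathcal{R})_{bc}$, and $-\rho_{cb}$. This yields the clean identity $\operatorname{Tr}(\mathcal{R})_{bc}=\rho_{cb}-\rho_{bc}$, so that the trace of the curvature operator vanishes if and only if $\rho$ is symmetric. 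The only care needed here is bookkeeping: tracking which of the three contractions produces the first-slot (Ricci) contraction versus the third-slot (trace) contraction, and supplying the correct sign from antisymmetry.

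Finally I would treat $(1)\Leftrightarrow(4)$ via the induced connection on the line bundle $\Lambda^m T^*M$. For a local volume form $\Omega=f\,dx^1\wedge\cdots\wedge dx^m$ one computes $\nabla_{\partial_i}\Omega=(\partial_i f-f\,\omega_i)\,dx^1\wedge\cdots\wedge dx^m$, so $\Omega$ is parallel exactly when $\partial_i\log|f|=\omega_i$, i.e. when $\omega$ is locally exact. By the Poincar\'e lemma this is equivalent to $d\omega=0$: closedness lets us solve $\omega=dg$ on any contractible chart and set $f=e^g$, while conversely an existing parallel $\Omega$ forces $\omega=d\log|f|$ to be closed. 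The main obstacle I anticipate is not any single computation but keeping conventions consistent throughout---especially the sign of the induced connection on the top exterior power and the precise factor relating $d\omega$ to $\operatorname{Tr}(\mathcal{R})$---since the whole lemma hinges on the two independent facts that $\operatorname{Tr}(\mathcal{R})=d\omega$ and that $\operatorname{Tr}(\mathcal{R})$ measures the failure of Ricci symmetry.
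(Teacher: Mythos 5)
Your proposal is correct and follows essentially the same route as the paper: the identity $\operatorname{Tr}(\mathcal{R})=d\omega$ (quadratic Christoffel terms cancelling) gives $(1)\Leftrightarrow(2)$, the contracted first Bianchi identity $\operatorname{Tr}(\mathcal{R})_{bc}=\rho_{cb}-\rho_{bc}$ gives $(2)\Leftrightarrow(3)$, and the computation of $\nabla$ on a volume form $f\,dx^1\wedge\cdots\wedge dx^m$ together with the Poincar\'e lemma gives $(1)\Leftrightarrow(4)$. The only cosmetic difference is that the paper writes the volume form as $e^\Phi dx^1\wedge\cdots\wedge dx^m$ where you write $f$ and pass to $\log|f|$.
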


\begin{proof} Although this result is well known (see, for example,
\cite{SS-62} page 99), we present the proof as it is elementary and central to our development.
By Equation (\ref{eqn-1}),
\begin{equation}\label{eqn-4}
\operatorname{Tr}\{\mathcal{R}(x,y)-\rho(y,x)+\rho(x,y)\}=0\,.
\end{equation}
The equivalence of Assertions (2) and (3) now follows. 
We have:
$$\begin{array}{l}
R_{ijk}{}^l\partial_{x_\ell}=\nabla_{\partial_{x_i}}\nabla_{\partial_{x_j}}\partial_{x_k}
-\nabla_{\partial_{x_j}}\nabla_{\partial_{x_i}}\partial_{x_k}\\
\quad=\{\partial_{x_i}\Gamma_{jk}{}^\ell-\partial_{x_j}\Gamma_{ik}{}^\ell
+\Gamma_{in}{}^\ell\Gamma_{jk}{}^n-\Gamma_{jn}{}^\ell\Gamma_{ik}{}^n\}\partial_{x_\ell}\,.
  \vphantom{\vrule height 11pt}\end{array}
$$
We show that Assertions (1) and (2) are equivalent by computing:
$$\begin{array}{l}\operatorname{Tr}\{\mathcal{R}_{ij}\}dx^i\wedge dx^j=\{\partial_{x_i}\Gamma_{jk}{}^k-\partial_{x_j}\Gamma_{ik}{}^k
+\Gamma_{in}{}^\ell\Gamma_{j\ell}{}^n-\Gamma_{jn}{}^\ell\Gamma_{i\ell}{}^n\}dx^i\wedge dx^j
  \vphantom{\vrule height 11pt}\\
\quad=\textstyle\{\partial_{x_i}\Gamma_{jk}{}^k-\partial_{x_j}\Gamma_{ik}{}^k\}dx^i\wedge dx^j
  =d\{\Gamma_{ij}{}^jdx^i\}\,.\vphantom{\vrule height11pt}
\end{array}$$
We have that
$$\nabla_{\partial_{x_i}}\{e^{\Phi}dx^1\wedge...\wedge dx^m\}
=\textstyle\{\partial_{x_i}\Phi-\sum_k\Gamma_{ik}{}^k\}\{e^\Phi dx^1\wedge...\wedge dx^m\}\,.
$$
Thus there exists a parallel volume form on $\mathcal{O}$ if and only if $\Gamma_{ik}{}^kdx^i$
is exact. As every closed $1$-form is locally exact, Assertions (1) and (4) are equivalent.
\end{proof}

We have the following representability theorem in this context:
\begin{theorem}\label{thm-3}
Let $\mathcal{A}\in\mathfrak{F}(V)$. Then there exists an equiaffine connection $\nabla$
on $T(V)$ so that $\mathcal{R}^\nabla_0=\mathcal{A}$.
\end{theorem}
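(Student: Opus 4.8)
The plan is to show that the very connection $\nabla$ produced in the proof of Theorem \ref{thm-1} is already equiaffine, so that no modification of the construction is needed. Recall that its Christoffel symbols are $\Gamma_{uv}{}^l=\frac13(A_{wuv}{}^l+A_{wvu}{}^l)x^w$ and that we have already verified $\mathcal{R}^\nabla_0=\mathcal{A}$. By the equivalence of Assertions (1) and (3) in Lemma \ref{lem-2}, it therefore suffices to check that the $1$-form $\omega=\Gamma_{ij}{}^j\,dx^i$ is closed on all of $T(V)$, and not merely that the curvature happens to be Ricci symmetric at the origin.

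First I would compute $\Gamma_{ij}{}^j$ explicitly. Contracting the formula for the Christoffel symbols gives $\Gamma_{ij}{}^j=\frac13(A_{wij}{}^j+A_{wji}{}^j)x^w$. The two contractions of $\mathcal{A}$ appearing here are exactly the trace $\operatorname{Tr}\{\mathcal{A}(e_w,e_i)\}=A_{wij}{}^j$ of the curvature endomorphism and, after applying the antisymmetry $\mathcal{A}(x,y)=-\mathcal{A}(y,x)$, the Ricci tensor, via $A_{wji}{}^j=-\rho(e_w,e_i)$. Thus $\Gamma_{ij}{}^j$ is a function linear in the coordinates whose coefficients are built from $\rho$ and from $\operatorname{Tr}\{\mathcal{A}(e_w,e_i)\}$.

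The key point is that the hypothesis $\mathcal{A}\in\mathfrak{F}(V)$ forces the trace term to vanish. Indeed, Equation (\ref{eqn-4}) shows $\operatorname{Tr}\{\mathcal{A}(x,y)\}=\rho(y,x)-\rho(x,y)$, which is zero precisely because $\rho$ is symmetric. Hence $\omega=-\frac13\rho(e_w,e_i)x^w\,dx^i$ has coefficients linear in $x$ with symmetric coefficient matrix, and $d\omega=-\frac13\rho(e_p,e_i)\,dx^p\wedge dx^i$. Since $\rho$ is symmetric while $dx^p\wedge dx^i$ is antisymmetric, this sum vanishes and $d\omega=0$ identically; Lemma \ref{lem-2} then gives that $\nabla$ is equiaffine.

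I expect the only genuine subtlety to be conceptual rather than computational. A priori, knowing that $\mathcal{R}^\nabla_0$ is Ricci symmetric at the single point $0$ does not guarantee that $\nabla$ is equiaffine as a connection, which would ordinarily require correcting the construction; the pleasant observation is that the specific $\nabla$ of Theorem \ref{thm-1} has $\Gamma_{ij}{}^j$ linear in $x$ with a symmetric coefficient matrix, so closedness of $\omega$ is automatic. Once the two index contractions are correctly identified with $\rho$ and with $\operatorname{Tr}\{\mathcal{A}\}$ through the first Bianchi identity and the antisymmetry relation, the conclusion is immediate.
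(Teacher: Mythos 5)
Your proof is correct and is essentially identical to the paper's own argument: both reuse the connection of Theorem \ref{thm-1}, use Equation (\ref{eqn-4}) to conclude $A_{wij}{}^j=0$ from Ricci symmetry, identify the remaining contraction as $-\rho_{wi}$, and conclude $d\omega=-\frac13\rho_{wi}\,dx^w\wedge dx^i=0$ so that Lemma \ref{lem-2} applies. No changes are needed.
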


\medbreak\noindent\textit{Proof.} Let $\nabla$ be defined by Equation (\ref{eqn-2}); $\mathcal{R}_0^\nabla=\mathcal{A}$. Since
$\mathcal{A}\in\mathfrak{F}(V)$,
$A_{ijk}{}^k=0$ by Equation (\ref{eqn-4}). We use Lemma \ref{lem-2} to show $\nabla$ is equiaffine by computing:
\medbreak\quad
$d\omega=d\Gamma_{ik}{}^kdx^i=\textstyle\frac13(A_{lik}{}^k+A_{lki}{}^k)dx^l\wedge dx^i=-\textstyle\frac13\rho_{li}dx^l\wedge
dx^i=0$.\quad\qedbox
\medbreak\noindent{\bf \SectRep. Representation theory.}
Results of \cite{BGNS06} show that there is a $\operatorname{Gl}(V)$
equivariant short exact sequence:
$$0\rightarrow\ker(\rho)\rightarrow\mathfrak{F}(V)\mapright{\rho}S^2(V^*)\rightarrow0\,.$$
which is $\operatorname{Gl}(V)$ equivariantly split into two irreducible inequivalent $\operatorname{Gl}(V)$ summands. Thus we have:
\begin{equation}
\label{eqn-5}
\mathfrak{F}(V)=S^2(V^*)\oplus\ker(\rho)\quad\text{as a}\quad\operatorname{Gl}(V)\quad\text{module}\,.
\end{equation}
If $m=2$, then $\ker(\rho)=\{0\}$ so we shall assume $m\ge3$ henceforth. The associated projection of $\mathfrak{F}(V)$ on
$\ker(\rho)$ is provided by the {\it Weyl projective curvature operator}
$$\mathcal{P}_{\mathcal{A}}(x,y)z:=\mathcal{A}(x,y)z-\textstyle\frac1{m-1}\{\rho(y,z)x-\rho(x,z)y\}\,.$$
Following \cite{SSV91}, we say that
$\mathcal{A}\in\mathfrak{F}(V)$ is {\it projectively flat} if $\mathcal{P}_{\mathcal{A}}=0$, i.e. if $\mathcal{A}$ belongs to the summand
$S^2(V^*)$. The decomposition of Equation (\ref{eqn-5}) gives rise to two additional natural questions which we shall answer
affirmatively Sections \SectWeyl\ and \SectRicciFlat:
\begin{enumerate}
\item Is
every equiaffine algebraic curvature operator $\mathcal{A}$ which is projectively flat representable by
a equiaffine connection which is projectively flat?
\item Is every Ricci flat algebraic curvature operator representable
by a Ricci flat torsion free connection?
\end{enumerate}

\medbreak\noindent{\bf \SectWeyl. The Weyl projective curvature operator.}
Two connections $\nabla$ and
$\bar\nabla$ are said to be {\it projectively equivalent} if there is a $1$-form $\theta$ so
\begin{equation}\label{eqn-6}
\nabla_xy-\bar\nabla_xy=\theta(x)y+\theta(y)x\,.
\end{equation}
The unparametrized geodesics of $\nabla$ and $\bar\nabla$ coincide if and only if the two connections
are projectively equivalent. Furthermore, if $\nabla$ and $\bar\nabla$ are projectively equivalent, then
they have the same Weyl projective curvature tensors.
A connection is said to be {\it projectively flat} if it is projectively equivalent to a
flat connection or equivalently if $\mathcal{P}_\nabla$ vanishes identically  (Weyl) or if there exist
a local frames $s=(x_1,...,x_m)$ on a neighborhood of any point so that $\nabla_{x_i}x_j=\theta(x_i)x_j+\theta(x_j)x_i$.
\begin{theorem}\label{thm-4}
 Let $\mathcal{A}\in\mathfrak{F}(V)$ be projectively flat. Then there exists an equiaffine projectively flat
connection
$\nabla$ on $T(V)$ so $\mathcal{R}_0^\nabla=\mathcal{A}$.
\end{theorem}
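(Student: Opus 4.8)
The plan is to avoid the construction of Equation~(\ref{eqn-2}) used in Theorem~\ref{thm-3}: that connection realizes $\mathcal{A}$ and is equiaffine, but there is no reason for it to be projectively flat. Instead I would build $\nabla$ directly as a projective deformation of the flat connection $\nabla^0$ on $T(V)$ whose Christoffel symbols vanish in the standard coordinates, since by Equation~(\ref{eqn-6}) any such deformation is automatically projectively equivalent to a flat connection and hence projectively flat. The task then reduces to choosing the deforming $1$-form so that the curvature at the origin is the prescribed $\mathcal{A}$ and the equiaffine condition of Lemma~\ref{lem-2} holds.

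First I would exploit the hypothesis $\mathcal{P}_{\mathcal{A}}=0$ to write $\mathcal{A}$ entirely in terms of its Ricci tensor, namely $A_{ijk}{}^\ell=\frac1{m-1}\{\rho_{jk}\delta_i^\ell-\rho_{ik}\delta_j^\ell\}$, where $\rho_{ij}=\rho_{ji}$ because $\mathcal{A}\in\mathfrak{F}(V)$. Guided by this, I would set $\theta:=-\frac1{m-1}\rho_{jk}x^k\,dx^j$ and define $\nabla$ by $\Gamma_{ij}{}^k=\theta_i\delta_j^k+\theta_j\delta_i^k$, i.e.\ $\nabla_xy=\theta(x)y+\theta(y)x$. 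This $\theta$ is linear, so $\nabla$ is globally defined on $T(V)$, and $\theta(0)=0$.

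Next I would check $\mathcal{R}^\nabla_0=\mathcal{A}$. Because $\theta$ vanishes at the origin the quadratic Christoffel terms drop out, so $R_{ijk}{}^\ell(0)=\partial_i\Gamma_{jk}{}^\ell(0)-\partial_j\Gamma_{ik}{}^\ell(0)$; substituting $\partial_i\theta_j(0)=-\frac1{m-1}\rho_{ij}$ and using the symmetry of $\rho$ recovers exactly the expression for $A_{ijk}{}^\ell$ above. For the equiaffine property I would invoke Lemma~\ref{lem-2}: tracing gives $\omega=\Gamma_{ij}{}^j\,dx^i=(m+1)\theta$, whence $d\omega=(m+1)\,d\theta$, and $d\theta=0$ since $\rho_{ij}$ is symmetric while $dx^j\wedge dx^i$ is skew.

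The step that genuinely requires the hypotheses, rather than being routine, is the simultaneous satisfaction of the two requirements. Projective flatness forces the deformation to be a projective change determined by $\rho$, leaving essentially no freedom in $\theta$; what makes this forced choice compatible with the equiaffine condition is precisely that $\mathcal{A}\in\mathfrak{F}(V)$ renders $\rho$ symmetric, which both makes the curvature-matching equations solvable with a symmetric $\partial\theta(0)$ and causes $d\theta$ to vanish. Without Ricci symmetry neither the matching nor the equiaffine conclusion would go through.
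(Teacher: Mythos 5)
Your proposal is correct and is essentially the paper's own proof: both construct $\nabla$ as the projective deformation $\Gamma_{ij}{}^k=\theta_i\delta_j^k+\theta_j\delta_i^k$ of the flat connection by a linear closed $1$-form $\theta$ determined by $\rho_{\mathcal{A}}$ (hence projectively flat by Equation~(\ref{eqn-6})), and both verify the equiaffine property via $\omega=(m+1)\theta$ and $d\theta=0$ from the symmetry of $\rho$. The only immaterial difference is in the final verification: you check $\mathcal{R}_0^\nabla=\mathcal{A}$ by substituting into the explicit index formula $A_{ijk}{}^\ell=\frac1{m-1}\{\rho_{jk}\delta_i^\ell-\rho_{ik}\delta_j^\ell\}$, whereas the paper matches the Ricci tensors at $0$ and then concludes the full tensors agree because both Weyl projective curvatures vanish.
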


\medbreak\noindent\textit{Proof.} Let $\theta=\theta_idx^i$ be a closed smooth $1$-form on $V$ which vanishes at the origin.
Motivated by Equation (\ref{eqn-6}), we define a projectively flat torsion free connection $\nabla^\theta$ by setting
$\Gamma_{ij}{}^k=\theta_i\delta_j^k+\theta_j\delta_i^k$. Since $\omega:=\Gamma_{ik}{}^kdx^i=(m+1)\theta$, $d\omega=0$ and hence
$\nabla^\theta$ is equiaffine. We compute:
\medbreak\quad $R_{ijk}{}^l(0)=\left\{\partial_{x_i}\Gamma_{jk}{}^l-\partial_{x_j}\Gamma_{ik}{}^l\right\}(0)$
\par\qquad$
=\left\{(\partial_{x_i}\theta_j-\partial_{x_j}\theta_i)\delta_k^\ell+\partial_{x_i}\theta_k\delta_j^l-\partial_{x_j}\theta_k\delta_i^l)
\right\}(0)=\{\partial_{x_k}(\theta_i\delta_j^\ell-\theta_j\delta_i^\ell)\}(0)$
\par\quad 
$\rho_{uv}=(1-m)\partial_{x_u}\theta_v=(1-m)\partial_{x_v}\theta_u$.
\medbreak\noindent Suppose given $\mathcal{A}\in\mathfrak{F}(V)$ which is projectively flat. Let
$f(x):=\frac1{2(1-m)}\rho_{\mathcal{A},ij}x^ix^j$ and let $\theta:=\frac1{1-m}df$. One will then have
$\rho_{\mathcal{R}}(0)=\rho_{\mathcal{A}}$. Since $\nabla^\theta$ and $\mathcal{A}$ are projectively flat, the full tensors agree, i.e.
$\mathcal{R}(0)=\mathcal{A}$ as desired.
\hfill\qedbox

\medbreak\noindent{\bf \SectRicciFlat. Ricci flat curvature operators.}
We say that $\mathcal{A}\in\mathfrak{A}(V)$ is {\it Ricci flat} if $\rho=0$; such an operator is then necessarily equiaffine.
Similarly a torsion free connection $\nabla$ is said to be Ricci flat if $\mathcal{R}_P$ is Ricci flat for all $P\in M$; such a $\nabla$
is then necessarily equiaffine. We have a representability theorem in this final context as well; we may suppose $m\ge3$ since
any Ricci flat connection is flat if $m=2$.

\begin{theorem}\label{thm-5} 
Let $m\ge3$. Let $\mathcal{A}\in\mathfrak{F}(V)$ be Ricci flat. There exists the germ of a real analytic
torsion free Ricci flat connection $\nabla$ on $T(V)$ with
$\mathcal{R}_0^\nabla=\mathcal{A}$.
\end{theorem}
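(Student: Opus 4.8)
The plan is to realize $\mathcal{A}$ first by the linear connection of Theorem \ref{thm-1} and then to correct it by higher order terms so as to kill the Ricci tensor identically, the correction being produced by the Cauchy--Kovalevskaya theorem so that the resulting connection is real analytic.

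\emph{Step 1 (reduction).} Let $\nabla^0$ be the connection of Equation (\ref{eqn-2}), so $\mathcal{R}^{\nabla^0}_0=\mathcal{A}$ and the Christoffel symbols $\Gamma_{ij}{}^k$ are linear in $x$. Because $\mathcal{A}$ is Ricci flat, the contractions $\sum_k A_{ijk}{}^k=\operatorname{Tr}\mathcal{A}(e_i,e_j)$ and $\sum_k A_{ikj}{}^k$ both vanish (by Equation (\ref{eqn-4}) and the first Bianchi identity); hence $\Gamma_{ij}{}^j\equiv0$ and, moreover, the part of $\rho_{\nabla^0}$ linear in the derivatives of $\Gamma$ vanishes identically. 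Consequently $\rho_{\nabla^0}$ reduces to the homogeneous quadratic polynomial $\rho_{\nabla^0,jk}=-\Gamma_{jn}{}^a\Gamma_{ak}{}^n$. Thus $\nabla^0$ already represents $\mathcal{A}$ and is equiaffine; only its quadratic Ricci defect must be removed, and any correction vanishing to second order at $0$ leaves $\mathcal{R}_0$ unchanged.

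\emph{Step 2 (the Cauchy problem).} I keep the volume gauge $\Gamma_{ij}{}^j\equiv0$, so that $\operatorname{Tr}(\mathcal{R})=0$ and, by Equation (\ref{eqn-4}), the Ricci tensor stays symmetric; the Ricci flat condition then becomes the first order quasilinear system $\partial_a\Gamma_{jk}{}^a=\Gamma_{jn}{}^a\Gamma_{ak}{}^n$ in the $\binom{m+1}{2}$ symmetric unknowns $\Gamma_{jk}{}^a$. I single out $x^1$ as the evolution variable, treat $\{\Gamma_{jk}{}^1\}$ as the unknowns to be propagated, prescribe the remaining components $\{\Gamma_{jk}{}^a\}_{a\ge2}$ as fixed real analytic functions carrying the Theorem \ref{thm-1} one--jet, and take Cauchy data on $\{x^1=0\}$ equal to the restriction of the Theorem \ref{thm-1} symbols. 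Solving the equations $\rho_{jk}=0$ with $j\ge2$ for $\partial_1\Gamma_{jk}{}^1$ puts this subsystem in Cauchy--Kovalevskaya normal form, so a real analytic germ exists. The one--jet at $0$ is unchanged precisely because Ricci flatness forces $\sum_a\partial_a\Gamma^0_{jk}{}^a(0)=0$, which is exactly the compatibility needed for the evolved value $\partial_1\Gamma_{jk}{}^1(0)$ to reproduce the Theorem \ref{thm-1} value; hence $\mathcal{R}_0^\nabla=\mathcal{A}$.

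\emph{The main obstacle.} As in the Cauchy problem for the Einstein equations, the contracted second Bianchi identity makes the equations $\rho_{1k}=0$ \emph{characteristic}: the term $\partial_1\Gamma_{1k}{}^1$ cancels and $\rho_{1k}=0$ becomes a constraint rather than an evolution equation. The crux is therefore to choose Cauchy data satisfying these constraints on $\{x^1=0\}$ and to prove that they \emph{propagate}, i.e. that the Bianchi identity forces $\rho_{1k}$ to vanish off the slice once it vanishes on it and the evolved equations $\rho_{jk}=0$ ($j\ge2$) hold. Establishing this propagation --- equivalently, exploiting the large underdeterminacy of the system to arrange a genuinely non-characteristic form --- is the heart of the argument; the remaining verifications (torsion freeness, the volume gauge, and the matching of the one--jet) are routine.
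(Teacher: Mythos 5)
Your Step 1 is correct, and it coincides with the paper's own starting point: for the connection of Equation (\ref{eqn-2}), Ricci flatness of $\mathcal{A}$ forces $\Gamma_{ij}{}^j\equiv0$ and kills the part of $\rho$ that is linear in derivatives of $\Gamma$, leaving only the quadratic defect (this is exactly Equation (\ref{eqn-7}) together with the phenomenon of Remark \ref{rmk-6}). But Step 2 is not a proof, and you say so yourself: you set up a Cauchy--Kovalevskaya evolution and then concede that the equations $\rho_{1k}=0$ are characteristic, so that they are constraints whose propagation off the slice $\{x^1=0\}$ must be established --- and you leave that propagation unproved, calling it ``the heart of the argument.'' That admission is accurate: it \emph{is} the heart, and without it the theorem is not proved. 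Note also that your gauge and your splitting are in tension even before the propagation question arises: if $\Gamma_{ij}{}^j\equiv0$ is imposed and the components $\Gamma_{jk}{}^a$, $a\ge2$, are prescribed, then $\Gamma_{k1}{}^1=-\sum_{j\ge2}\Gamma_{kj}{}^j$ is already determined by the prescribed data, so the would-be evolution equations for these unknowns are overdetermined consistency conditions. In the Einstein equations the analogous propagation is rescued by the contracted second Bianchi identity ($\operatorname{div}G=0$); for a torsion-free affine connection the second Bianchi identity still holds, but whether its contractions close into a homogeneous system forcing the constraint quantities to vanish in your particular gauge and splitting is precisely what you would have to verify, and nothing in your write-up does so.

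For comparison, the paper never puts the system in evolutionary form and thereby avoids constraints entirely. In the gauge $\omega=0$ the Ricci flat condition is the divergence equation $\partial_{x_l}\Gamma_{jk}{}^l=\Gamma_{jn}{}^l\Gamma_{lk}{}^n$ of Equation (\ref{eqn-7}), which is vastly underdetermined, and the paper solves it by the explicit recursion of Equations (\ref{eqn-12})--(\ref{eqn-14}): the quadratic defect $\Theta_{2\nu,ij}$ created at each stage is absorbed by a \emph{single} new Christoffel component $\Gamma_{2\nu+1,ij}{}^{k_{ij}}=\int_{k_{ij}}\Theta_{2\nu,ij}$, where the integration index $k_{ij}$ is chosen distinct from $i$ and $j$ (this is where $m\ge3$ enters); that choice simultaneously preserves the gauge $\Gamma_{ij}{}^j=0$ and the symmetry in $ij$, so every component of $\rho$ --- including the ones your scheme would treat as constraints --- is cancelled identically, with no propagation lemma needed. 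Real analyticity then comes not from Cauchy--Kovalevskaya but from the hand-made geometric-series estimate of Equation (\ref{eqn-16}) after complexification, uniform limits of holomorphic functions being holomorphic. To complete your argument you must either prove the constraint-propagation lemma for affine connections in your gauge, or abandon the non-characteristic framing and absorb the defect componentwise as the paper does.
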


\medbreak\noindent{\bf Proof.} Let $\nabla$ be a torsion free connection. Set
$\omega:=\Gamma_{ij}{}^jdx^i$. If $\omega=0$, then
\begin{equation}\label{eqn-7}
\rho_{jk}=\partial_{x_l}\Gamma_{jk}{}^l-\partial_{x_j}\Gamma_{lk}{}^l+\Gamma_{ln}{}^l\Gamma_{jk}{}^n
-\Gamma_{jn}{}^l\Gamma_{lk}{}^n
=\partial_{x_l}\Gamma_{jk}{}^l-\Gamma_{jn}{}^l\Gamma_{lk}{}^n\,.
\end{equation}
For each pair of
indices $\{i,j\}$, not necessarily distinct, choose an index
$k_{ij}=k_{ji}$ which is distinct from
$i$ and from $j$. If $\Theta$ is a polynomial, let
\begin{equation}\label{eqn-8}
\textstyle(\int_k\Theta)(x):=x^k\int_{0}^1\Theta(x^1,...,x^{k-1},tx^k,x^{k+1},...,x^m)dt
\end{equation}
be the indefinite integral. We then have that
\begin{eqnarray}\label{eqn-9}
&&\partial_{x_k}\textstyle\int_k\Theta=\Theta,\\
&&\textstyle|\Theta(x)|\le C|x|^\mu\text{ for }|x|\le\varepsilon
\quad\Rightarrow\quad|\int_k\Theta(x)|\le C|x|^{\mu+1}\text{ for }|x|\le\varepsilon\,.\label{eqn-10}
\end{eqnarray}

 We
suppose
$\nabla$ is defined by the connection
$1$-form
\begin{equation}\label{eqn-11}
\Gamma=\Gamma_1+\Gamma_3+...+\Gamma_{2\nu-1}+...
\end{equation}
where the $\Gamma_{2\nu-1}$ and $\Theta_{2\nu}$
are the polynomials defined recursively by the relations:
\begin{eqnarray}
&&\Gamma_{1,ij}{}^l=\textstyle\frac13(A_{kij}{}^l+A_{kji}{}^l)x^k,\label{eqn-12}\\
&&\Theta_{2\nu,ij}=\Gamma_{2\nu-1,in}{}^l\Gamma_{2\nu-1,jl}{}^n\label{eqn-13}\\
&&\qquad+\textstyle\sum_{\mu<\nu}\{\Gamma_{2\nu-1,in}{}^l\Gamma_{2\mu-1,jl}{}^n
+\Gamma_{2\mu-1,in}{}^l\Gamma_{2\nu-1,jl}{}^n\},\nonumber\\
&&\Gamma_{2\nu+1,ij}{}^l=\delta_{k_{ij}}^l\textstyle\int_{k_{ij}}\Theta_{2\nu,ij}\quad\text{for}\quad\nu\ge1\,.\label{eqn-14}
\end{eqnarray}
 Since $\rho_{\mathcal{A}}=0$, we have $\Gamma_{1,ij}{}^j=0$. As $k_{ij}$ is an index distinct from $i$ and $j$, it
follows that
$\Gamma_{2\nu+1,ij}{}^j=0$ for $\nu\ge1$. Thus 
$\omega=0$.
We have $\Gamma_{1,ij}{}^l=\Gamma_{1,ji}{}^l$. It is clear from Equation (\ref{eqn-13}) that $\Theta_{2\nu,ij}=\Theta_{2\nu,ji}$;
we integrate to see 
$$\Gamma_{2\nu+1,ij}{}^l=\Gamma_{2\nu+1,ji}{}^l\quad\text{for}\quad\nu\ge1\,.$$
Thus $\nabla$ is torsion free. Clearly $\Gamma_1=O(|x|)$. We use induction, the recursion relations of Equations (\ref{eqn-13}) and
(\ref{eqn-14}), and the estimate of Equation (\ref{eqn-10}) to see
\begin{equation}\label{eqn-15}
||\Gamma_{2\nu-1}||=O(|x|^{2\nu-1})\quad\text{for}\quad\nu\ge1
\end{equation}
where $||$ is a suitably chosen operator norm. As $\Gamma=\Gamma_1+O(|x|^3)$, Equation (\ref{eqn-3})
shows $R_{ijk}{}^l(0)=A_{ijk}{}^l$. Since $\partial_l\Gamma_{2\nu+1,ij}^l=\Theta_{2\nu,ij}$ and since $\rho(0)=0$, Equation
(\ref{eqn-7}) implies that $\rho\equiv0$.

We must improve Equation (\ref{eqn-15}) to show that there exist $C>0$ and $\varepsilon>0$ so
\begin{equation}\label{eqn-16}
||\Gamma_{2\nu-1}||\le C^\nu|x|^{2\nu-1}\quad\text{for}\quad|x|\le\varepsilon\,.
\end{equation}
We complexify and permit $x^i\in\mathbb{C}$ to be coordinates on $V\otimes_{\mathbb{R}}\mathbb{C}$.
We extend Equation (\ref{eqn-8}) to the complex domain; Equations (\ref{eqn-9}) and (\ref{eqn-10})
continue to hold. We then extend Equations (\ref{eqn-12}), (\ref{eqn-13}), and (\ref{eqn-14}) to the complex
domain as well.

Since $\Gamma_1$ is a homogeneous linear polynomial, $||\Gamma_1||\le C_1|x|$ where we may take $C_1\ge1$. Set $C=4C_1$ and
$\varepsilon=(8C_1)^{-1}<1$. We suppose inductively that the estimate in Equation (\ref{eqn-16}) holds for
$\nu\le\mu$ and attempt to establish the estimate for $\Gamma_{2\nu+1}$. We use the recursive definition of Equation (\ref{eqn-13}) to estimate
\begin{eqnarray*}
&&||\Theta_{2\nu}||\le 2||\Gamma_{2\nu-1}||\cdot\textstyle\sum_{\mu\le\nu}||\Gamma_{2\mu-1}||\\
&\le&2(4C_1)^{\nu}|x|^{2\nu-1}\left\{C_1|x|+\textstyle\sum_{2\le\mu\le\nu}(4C_1)^\mu|x|^{2\mu-1}\right\}\\
&\le&2(4C_1)^{\nu}|x|^{2\nu-1}\left\{C_1|x|+(4C_1)^2|x|^3(1-4C_1|x|^2)^{-1}\right\}\\
&\le&2(4C_1)^{\nu}|x|^{2\nu-1}\left\{C_1|x|+|x|\right\}\le(4C_1)^{\nu+1}|x|^{2\nu}\,.
\end{eqnarray*}
Integrating this estimate then establishes the estimate of Equation (\ref{eqn-16}) since we pick up an extra power of $|x|$ in
the decay by Equation (\ref{eqn-10}). This
establishes the convergence in the $C^0$ sup norm of the series defining $\Gamma$ in Equation (\ref{eqn-11}) for $|x|\le\varepsilon$. The
$\Gamma_i$ are polynomials and hence holomorphic. Recall that the uniform limit of holomorphic functions is holomorphic and furthermore
the convergence of the series in question is in fact uniform in the
$C^k$ norm for any $k$ for $|x|\le\frac12\varepsilon$. Thus $\Gamma$ is smooth and the convergence is in the $C^k$ norm.\hfill\qedbox

\begin{remark}\label{rmk-6}
\rm It is in fact necessary to correct the construction of Theorem \ref{thm-1} in this setting. For example, suppose that
the non-zero components of
$\mathcal{A}$ are given by:
$$A_{211}{}^2=1,\quad A_{121}{}^2=-1,\quad A_{311}{}^3=-1,\quad A_{131}{}^3=1\,.$$
Then $\mathcal{A}$ is Ricci flat.
If we use Equation (\ref{eqn-2}) to define $\nabla$, then $\rho_{\nabla,22}=\frac29x_2^2\ne0$.
\end{remark}

\begin{remark}\label{rmk-7}
\rm In contrast to the constructions performed in previous sections, the construction used to prove Theorem \ref{thm-5} is
not $\operatorname{Gl}(V)$ equivariant.
\end{remark}

%\medbreak\noindent{\bf 7. Ricci flat curvature operators.}  If we
%apply the ansatz used to prove Theorem \ref{thm-2},
%\medbreak\quad$\Gamma_{uv}{}^l:=-\textstyle\frac13\sum_w(A_{uwv}{}^l+A_{vwu}{}^l)x^w$.
%\medbreak\noindent We then have $\Gamma_{ul}{}^l=0$ since $A_{uwl}{}^l=A_{lwu}{}^l=0$. We may therefore compute:
%\medbreak\quad$\rho_{jk}=\sum_{l}\left\{\partial_{x_l}\Gamma_{jk}{}^l-\partial_{x_j}\Gamma_{lk}{}^l\right\}
%+\sum_{l,n}\left\{\Gamma_{ln}{}^l\Gamma_{jk}{}^n-\Gamma_{jn}{}^l\Gamma_{lk}{}^n\right\}$
%\par\qquad\quad$=\sum_l\partial_{x_l}\Gamma_{jk}{}^l-\sum_{l,n}\Gamma_{jn}{}^l\Gamma_{lk}{}^n$.
%\medbreak\noindent Since this does not vanish, our ansatz fails in this instance. Life is like that sometimes.

\medbreak\noindent{\bf Acknowledgments.} Research of P. Gilkey partially supported by the
Max Planck Institute in the Mathematical Sciences (Leipzig) and by Project MTM2006-01432 (Spain). 
Research of S. Nik\v cevi\'c partially supported by Project 144032 (Srbija). Both authors acknowledge with
gratitude many conversations with Udo Simon; he is our mentor in this area and we have been honored by his friendship.

\end{document}